\newtheorem{claim}{Claim}[]
\newtheorem{theorem}{Theorem}[section]
\newtheorem{corollary}{Corollary}[]
\theoremstyle{definition}
\newtheorem{remark}{Remark}[section]
\newtheorem{definition}{Definition}[]
\newtheorem{example}{Example}[section]
\theoremstyle{plain}
\newcommand{\diam}{\operatorname{diam}}
\newcommand{\dis}{\operatorname{dis}}
\newcommand{\dist}{\operatorname{d}}
\newcommand{\GH}{\operatorname{\mathcal{G\!H}}}
\newcommand{\Iso}{\operatorname{Iso}}
\renewcommand{\:}{\colon}
\newcommand{\0}{\emptyset}
\renewcommand{\c}{\circ}
\newcommand{\rom}[1]{{\em #1}}
\newcommand{\sm}{\setminus}
\newcommand{\x}{\times}
\newcommand{\e}{\varepsilon}
\renewcommand{\l}{\lambda}
\newcommand{\s}{\sigma}
\renewcommand{\ss}{\subset}
\newcommand{\R}{\mathbb{R}}
\newcommand{\cM}{\mathcal{M}}
\newcommand{\cP}{\mathcal{P}}
\newcommand{\cR}{\mathcal{R}}
\newcommand{\bA}{\overline{A}}
\newcommand{\bX}{\overline{X}}
\newcommand{\bY}{\overline{Y}}
\title{When the Gromov--Hausdorff distance between finite-dimensional space and its subset is finite?}
\author{I.\,N.\,Mikhailov,\;A.\,A.\,Tuzhilin}
\date{}
\begin{document}
\maketitle
\begin{abstract}
In this paper we prove that the Gromov--Hausdorff distance between $\R^n$ and its subset $A$ is finite if and only if $A$ is an $\e$-net in $\R^n$ for some $\e>0$. For infinite-dimensional Euclidean spaces this is not true. The proof is essentially based on upper estimate of the Euclidean Gromov--Hausdorff distance by means of the Gromov-Hausdorff distance.

\textbf{Keywords}: metric space, $\e$-net, Gromov-Hausdorff distance.
\end{abstract}

\section{Introduction}
\markright{\thesection.~Introduction}
This paper is devoted to investigation of geometry of the classical \emph{Gromov--Hausdorff distance}~\cite{BBI}, \cite{GromovFr}, \cite{GromovEng}.  Traditionally, the Gromov--Hausdorff distance is used for bounded metric spaces, mainly, for compact ones. In the case of non-bounded metric spaces, this distance is applied to define the \emph{pointed Gromov--Hausdorff convergence}, and besides, there were a few attempts to define the corresponding distance function in this case, see for instance~\cite{Herron}. Since the Gromov--Hausdorff distance between isometric metric spaces vanishes, it is natural to identify such spaces in this theory. Thus, the main space for investigating the Gromov--Hausdorff distance is the space $\cM$ of non-empty compact metric spaces considered upto isometry, endowed with the Gromov--Hausdorff distance. Here the distance function is a metric, and this metric is complete, separable, geodesic, etc., see for details~\cite{BBI}, \cite{GromovEng}.

In~\cite{GromovEng}, M.\,Gromov described some geometric properties of the Gromov--Hausdorff distance on the space $\GH$ of all non-empty metric spaces, not necessarily bounded, considered upto isometry. It is easy to see that $\GH$ is not a set, but a proper class in terms of von Neumann--Bernays--G\"odel set theory. M.\,Gromov suggested to consider subclasses consisting of all metric spaces on finite distance between each other. We called such subclasses \emph{clouds}. M.\,Gromov announced that the clouds are obviously complete and contractible~\cite{GromovEng}. He suggested to see that on the example of the cloud containing $\R^n$. The idea is to consider a mapping of $\GH$ into itself that for each $X\in\GH$, multiplies all distances in $X$ by some real $\l>0$. It is easy to see that such mapping takes $\R^n$ to an isometric metric space, i.e., $\R^n$ is a fixed point of this mapping. However, the Gromov--Hausdorff distances from $\R^n$ to all other metrics spaces in its cloud are multiplied by $\l$ as well. It remains to see what happens when $\l\to0+$.

Nevertheless, these ``obvious observations'' lead to a few questions. To start with, the clouds are proper classes (B.\,Nesterov, private conversations); contractibility is a topological notion; so, to speak about contractibility of a cloud, we have to define a topology on it. However, it is not possible to introduce a topology on a proper class, because the proper class cannot be an element of any other class by definition, but each topological space is an element of its own topology. In~\cite{BogatyTuzhilin} by S.\,A.\,Bogaty and A.\,A.\,Tuzhilin, the authors developed a convenient language that allows to avoid the set-theoretic problems. Namely, they introduced an analog of topology on so-called set-filtered classes (each set belongs to this family, together with the class $\GH$) and defined continuous mappings between such classes. At the same time, the authors of~\cite{BogatyTuzhilin} found examples of metric spaces that jump onto infinite Gromov--Hausdorff distance after multiplying their distance functions on some $\l>0$. Thus, the multiplication on such $\l$ does not map the clouds into themselves. This strange behavior of clouds increased the interest to them, see~\cite{BogatyTuzhilin} for details.

In the present paper we continue investigation of the geometry of the standard (non-pointed) Gromov--Hausdorff distance between non-bounded metric spaces. It is well-known that each $\e$-net of a metric space $X$ is on finite Gromov--Hausdorff distance from $X$. Is the converse statement true as well? Example~\ref{examplel2} shows that it is not true even for infinite-dimensional Euclidean spaces. In the present paper we prove that the converse statement holds for finite-dimensional Euclidean spaces. A natural question is to understand what happens for other finite-dimensional normed spaces. It turns out that there are a few obstacles to obtain such generalizations. 

Firstly, our approach is not completely straightforward. In particularly, it is based on Theorem~2 from paper~\cite{Memoli} by F.\,Memoli. This inequality provides an upper estimate of the \emph{Euclidean Gromov--Hausdorff distance\/} in terms of the classical Gromov--Hausdorff distance. The effectiveness of this theorem is based on the richness of the isometry group $\Iso(\R^n)$ of $\R^n$, that is not the case for other finite-dimensional normed spaces. Secondly, we might consider some $\e$-net $\s$ in $\R^n$ and try to approximate it in terms of the Gromov--Hausdorff distance with an $\e'$-net $\s'$ in a finite-dimensional normed space $X$. However, according to paper~\cite{MikhailovYoung}, it holds $\dist_{GH}(\s,\,\s')=\infty$ unless $X$ is isometric to $\R^n$, so it is also impossible to transfer our result to such $X$ using this idea. Summing up, the question whether or not a subset $Y$ of a finite-dimensional normed space $X$ on a finite Gromov--Hausdorff distance from $X$ is an $\e$-net in $X$ for some $\e>0$ remains open.

\subsection*{Acknowledgements}
We thank professor A.\,O.\,Ivanov for permanent attention to the work. We also thank the team of the seminar ``Theory of extremal networks'' leaded by A.\,O.\,Ivanov and A.\,A.\,Tuzhilin. This research was supported by the National Key R\&D Program of China (Grant No. 2020YFE0204200).

\section{Preliminaries}
\markright{\thesection.~Preliminaries}
For an arbitrary metric space $X$, the distance between its points $x$ and $y$ we denote by $|xy|$. Let $B_r(a)=\{x\in X:|ax|\le r\}$ and $S_r(a)=\{x\in X:|ax|=r\}$ be the closed ball and the sphere of radius $r$ centered at the point $a$, respectively. For an arbitrary subset $A\ss X$, its closure in $X$ is denoted by $\bA$. For non-empty subsets $A\ss X$ and $B\ss X$, we put $\dist(A,\,B)=\inf\bigl\{|ab|:\,a\in A,\,b\in B\bigl\}$.

\begin{definition}
Let $A$ and $B$ be non-empty subsets of a metric space $X$. \emph{The Hausdorff distance} between $A$ and $B$ is the value
$$
\dist_H(A,\,B)=\inf\bigl\{r>0:A\ss B_r(B),\,B\ss B_r(A)\bigr\}.
$$
\end{definition}

\begin{definition}
Let $X$ and $Y$ be metric spaces. If $X',\,Y'$ are subsets of a metric space $Z$ such that $X'$ is isometric to $X$ and $Y'$ is isometric to $Y$, then we call the triple $(X',\,Y',\,Z)$ \emph{a metric realization of the pair $(X,\,Y)$}.
\end{definition}

\begin{definition}
\emph{The Gromov--Hausdorff distance $\dist_{GH}(X,\,Y)$} between two metric spaces $X$, $Y$ is the infimum of positive numbers $r$ such that there exists a metric realization $(X',\,Y',\,Z)$ of the pair $(X,\,Y)$ with $\dist_H(X',\,Y') \le r$.
\end{definition}

Let $X$ and $Y$ be non-empty sets. Recall that any subset $\s\ss X\x Y$ is called a \emph{relation} between $X$ and $Y$. Denote the set of all non-empty relations between $X$ and $Y$ by $\cP_0(X,\,Y)$. We put
$$
\pi_X\:X\x Y\to X,\;\pi_X(x,\,y)=x,
$$
$$
\pi_Y\:X\x Y\to Y,\;\pi_Y(x,\,y)=y.
$$

\begin{definition}
A relation $R\ss X\x Y$ is called a \emph{correspondence\/} if $\pi_X|_R$ and $\pi_Y|_R$ are surjective. In other words, correspondences are multivalued surjective mappings. Denote the set of all correspondences between $X$ and $Y$ by $\cR(X,\,Y)$.
\end{definition}

\begin{definition}
Suppose $R\in \cR(X,\,Y)$, and $A\subset X$, $B\subset Y$. We use the following standard notation $$R(A) = \{y\in Y\,|\,\exists\,x\in A\:(x,\,y)\in R\},$$ $$R^{-1}(B) = \{x\in X\,|\,\exists\,y\in B\:(x,\,y)\in R\}.$$
\end{definition}

\begin{definition}
Let $X$, $Y$ be arbitrary metric spaces. Then for every $\s\in\cP_0(X,\,Y)$, \emph{the distortion of $\s$} is defined as
$$
\dis\s=\sup\Bigl\{\bigl||xx'|-|yy'|\bigr|:(x,\,y),\,(x',\,y')\in\s\Bigr\}.
$$
\end{definition}

\begin{claim}[\cite{BBI}, \cite{TuzhilinLectures}] \label{claim:distGHformula}
For arbitrary metric spaces $X$ and $Y$, the following equality holds
$$
2\dist_{GH}(X,\,Y)=\inf\bigl\{\dis\,R:R\in\cR(X,\,Y)\bigr\}.
$$
\end{claim}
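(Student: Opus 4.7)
The plan is to establish the equality by proving both inequalities separately, constructing an explicit metric realization from a given correspondence in one direction, and an explicit correspondence from a given metric realization in the other.

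For the inequality $2\dist_{GH}(X,Y) \le \inf\{\dis R : R \in \cR(X,Y)\}$, I would fix an arbitrary $R \in \cR(X,Y)$ with $\dis R = r$ and build a metric realization of $(X,Y)$ of Hausdorff distance at most $r/2$. On the disjoint union of $X$ and $Y$ I would keep the original metrics and define, for $x \in X$ and $y \in Y$,
$$
|xy| := \inf\bigl\{|xx'| + r/2 + |y'y| : (x', y') \in R\bigr\}.
$$
Surjectivity of the two projections of $R$ immediately shows that every $x \in X$ lies within $r/2$ of some $y \in Y$ and vice versa, so the Hausdorff distance in this realization is at most $r/2$. The main technical step is to verify that the formula above defines a genuine metric: the only non-trivial case is the triangle inequality $|y_1 x| + |x y_2| \ge |y_1 y_2|$ for $x \in X$ and $y_1, y_2 \in Y$ (together with its $X/Y$-swapped analogue), where expanding the two infima and applying the distortion bound $|x'x''| \le |y'y''| + r$ for $(x', y'), (x'', y'') \in R$ produces the desired estimate.

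For the opposite inequality, I would take any metric realization $(X', Y', Z)$ of the pair $(X, Y)$ with $\dist_H(X', Y') < d$, and for each $\e > 0$ define
$$
R_\e := \bigl\{(x, y) \in X' \x Y' : |xy| < d + \e\bigr\}.
$$
The Hausdorff condition forces both projections of $R_\e$ to be surjective, so $R_\e$ is a correspondence; and for any two pairs $(x,y), (x',y') \in R_\e$, two applications of the triangle inequality in $Z$ give $\bigl||xx'| - |yy'|\bigr| \le |xy| + |x'y'| < 2(d + \e)$, hence $\dis R_\e \le 2(d + \e)$. Pulling $R_\e$ back through the isometries $X \to X'$ and $Y \to Y'$ yields a correspondence between the abstract spaces with the same distortion bound, and taking $\e \to 0$ followed by $d \to \dist_{GH}(X,Y)$ concludes the argument.

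The main obstacle is the triangle inequality check in the forward direction, because it is precisely there that the numerical distortion bound must be converted into a metric inequality; the remaining steps are routine manipulations with infima, surjectivity, and triangle inequalities.
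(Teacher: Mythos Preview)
The paper does not supply its own proof of this claim; it is stated with citations to \cite{BBI} and \cite{TuzhilinLectures} and used as a black box. Your argument is precisely the standard proof that appears in those references (the admissible metric on the disjoint union built from a correspondence, and the correspondence extracted from a realization via the $\e$-thickening of the diagonal), so there is nothing to contrast on the level of strategy.

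One small technical point worth tightening: when $\dis R = r = 0$, your formula $|xy| = \inf\{|xx'| + r/2 + |y'y|\}$ can yield $|xy| = 0$ for $x\in X$, $y\in Y$, so you obtain only a pseudometric on the disjoint union, not a metric realization in the sense of the paper's Definition~2. The usual remedy---replace $r/2$ by $r/2 + \e$ for arbitrary $\e>0$ and let $\e\to 0$ at the end---fits seamlessly into your scheme and should be mentioned; with that adjustment the proof is complete.
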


Recall that for any sets $X$, $Y$, $Z$, and relations $\s_1\in\cP_0(X,\,Y)$, $\s_2\in\cP_0(Y,\,Z)$, the \emph{composition of $\s_1$ and $\s_2$}, denoted by $\s_2\c\s_1$, is the set of all $(x,\,z)\in X\x Z$ for which there exists $y\in Y$ such that $(x,\,y)\in\s_1$ and $(y,\,z)\in\s_2$.

\begin{claim}[\cite{BBI}, \cite{TuzhilinLectures}] \label{corrcomposition}
Let $X,\,Y,\,Z$ be metric spaces, $R_1\in\cR(X,\,Y)$, $R_2\in\cR(Y,\,Z)$ Then $R_2\c R_1\in\cR(X,\,Z)$ and the following inequality holds\/\rom: 
$$
\dis(R_2\c R_1)\le\dis R_1+\dis R_2.
$$
\end{claim}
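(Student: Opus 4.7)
The claim splits into two assertions: that $R_2\c R_1$ is a correspondence, and the distortion bound. I would handle them in that order, since the second part requires that the composition actually be non-empty and the natural witnesses in $Y$ exist for every pair in $R_2\c R_1$.

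For the first assertion, the plan is to verify surjectivity of each of the two projections $\pi_X\big|_{R_2\c R_1}$ and $\pi_Z\big|_{R_2\c R_1}$ directly from the definition. Given $x\in X$, surjectivity of $\pi_X\big|_{R_1}$ produces some $y\in Y$ with $(x,y)\in R_1$; surjectivity of $\pi_Y\big|_{R_2}$ then produces some $z\in Z$ with $(y,z)\in R_2$, so by definition of composition $(x,z)\in R_2\c R_1$. A symmetric chase starting from an arbitrary $z\in Z$ (using $\pi_Z\big|_{R_2}$ and then $\pi_Y\big|_{R_1}$) supplies the other surjectivity.

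For the distortion bound, I would take arbitrary pairs $(x,z),(x',z')\in R_2\c R_1$, unfold the definition of composition to choose intermediate points $y,y'\in Y$ with $(x,y),(x',y')\in R_1$ and $(y,z),(y',z')\in R_2$, and then apply the triangle inequality for $|\cdot|$ on $\R$:
$$
\bigl||xx'|-|zz'|\bigr|\le\bigl||xx'|-|yy'|\bigr|+\bigl||yy'|-|zz'|\bigr|\le\dis R_1+\dis R_2.
$$
Each of the two final bounds is immediate from the definition of distortion applied to $R_1$ and $R_2$ respectively. Taking the supremum over all pairs $(x,z),(x',z')\in R_2\c R_1$ yields the desired inequality.

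I do not anticipate any real obstacle: the claim is a direct unwinding of the definitions of correspondence, composition, and distortion, together with a single application of the triangle inequality on the real line. The only point requiring a bit of care is to notice that the auxiliary points $y,y'$ used in the estimate need not be unique, but the bound does not depend on the choice, so taking the supremum on the left causes no issue.
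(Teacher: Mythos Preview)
Your proof is correct and is exactly the standard argument; the paper does not include its own proof of this claim, citing it instead as a known fact from \cite{BBI} and \cite{TuzhilinLectures}.
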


\begin{claim}\label{claim:GHAB}
Given non-empty subsets $A$ and $B$ of a metric space $X$ with $\dist_H(A,\,B)<c$, the set $U=\{(a,\,b)\in A\x B:|ab|<c\}$ is a correspondence between $A$ and $B$ such that $\dis U\le 2c$.
\end{claim}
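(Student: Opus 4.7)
The claim has two parts: verifying that $U$ is a correspondence, and bounding its distortion by $2c$. Both follow directly from the hypothesis $\dist_H(A,B) < c$ and the triangle inequality, so I would present the argument in two short steps and expect no real obstacle.

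First I would verify surjectivity of both projections. The definition of Hausdorff distance gives $A \ss B_c(B)$ and $B \ss B_c(A)$ in a strong sense: since $\dist_H(A,B) < c$, there exists $r < c$ with $A \ss B_r(B)$ and $B \ss B_r(A)$. Hence for each $a \in A$ we may pick $b \in B$ with $|ab| \le r < c$, so $(a,b) \in U$, which gives $\pi_A(U) = A$. Symmetrically $\pi_B(U) = B$, so $U \in \cR(A,B)$.

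Next I would bound the distortion. Fix $(a,b), (a',b') \in U$, so $|ab| < c$ and $|a'b'| < c$. The triangle inequality applied in $X$ yields
$$
|aa'| \le |ab| + |bb'| + |b'a'| < |bb'| + 2c,
$$
and symmetrically $|bb'| < |aa'| + 2c$. Therefore $\bigl||aa'| - |bb'|\bigr| < 2c$, and taking the supremum over all pairs in $U$ gives $\dis U \le 2c$, as required.

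The only mildly subtle point is the use of the strict inequality in $\dist_H(A,B) < c$ to produce the points of $U$; if the hypothesis had only $\dist_H(A,B) \le c$, one would have to replace $|ab| < c$ by $|ab| \le c$ in the definition of $U$. As stated the argument goes through cleanly, so I do not anticipate a genuine obstacle.
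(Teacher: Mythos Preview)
Your proof is correct and follows essentially the same approach as the paper: first use $\dist_H(A,B)<c$ to verify that both projections of $U$ are surjective, then apply the triangle inequality to bound $\bigl||aa'|-|bb'|\bigr|$ by $|ab|+|a'b'|<2c$. Your write-up is slightly more careful about the passage from the strict Hausdorff bound to the existence of nearby points, but the argument is the same.
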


\begin{proof}
The inequality $\dist_H(A,\,B)<c$ implies that for an arbitrary $a\in A$, there exists $b\in B$ such that $|ab|<c$. Hence, $(a,\,b)\in U$ and, therefore, the projection of $U$ to $A$ is surjective. Similarly, the projection of $U$ to $B$ is surjective. Thus, $U$ is a correspondence between $A$ and $B$.

Given $(a,\,b),\,(a',\,b')\in U$ with $|ab|<c$, $|a'b'|<c$, we get
$$
\bigl||aa'|-|bb'|\bigr|\le|ab|+|a'b'|=2c.
$$
Thus, $\dis U\le 2c$.
\end{proof}

\begin{claim}[\cite{BBI},\cite{TuzhilinLectures}]\label{ineqdiam}
Let $X$ and $Y$ be metric spaces, and the diameter of one of them is finite. Then
$$
\dist_{GH}(X,\,Y)\ge\frac12\bigl|\diam X-\diam Y\bigr|.
$$
\end{claim}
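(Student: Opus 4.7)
The plan is to use the correspondence characterization of the Gromov--Hausdorff distance from Claim~\ref{claim:distGHformula}: it suffices to show that for every correspondence $R\in\cR(X,\,Y)$ one has $\dis R\ge\bigl|\diam X-\diam Y\bigr|$, and then to take the infimum.

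To carry this out, I would fix an arbitrary $R\in\cR(X,\,Y)$ and assume first that both diameters are finite (this is the main case). For any $x,\,x'\in X$, use surjectivity of $\pi_Y|_R$ to pick $y,\,y'\in Y$ with $(x,\,y),\,(x',\,y')\in R$. The definition of distortion then gives $|xx'|\le|yy'|+\dis R\le\diam Y+\dis R$. Taking the supremum over $x,\,x'\in X$ yields $\diam X\le\diam Y+\dis R$, and by symmetry (using the surjectivity of $\pi_X|_R$) $\diam Y\le\diam X+\dis R$. Combining these two inequalities gives $\bigl|\diam X-\diam Y\bigr|\le\dis R$, and the claim follows from Claim~\ref{claim:distGHformula}.

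The only genuine subtlety is the case in which one diameter, say $\diam Y$, is finite while $\diam X=\infty$. Here the right-hand side of the asserted inequality is $+\infty$, so one must show $\dist_{GH}(X,\,Y)=\infty$. I would argue again via Claim~\ref{claim:distGHformula}: for any $R\in\cR(X,\,Y)$ and any sequence $x_n,\,x_n'\in X$ with $|x_nx_n'|\to\infty$, choosing corresponding $y_n,\,y_n'\in Y$ via $R$ gives $\dis R\ge|x_nx_n'|-|y_ny_n'|\ge|x_nx_n'|-\diam Y\to\infty$, so $\dis R=\infty$ for every correspondence, forcing $\dist_{GH}(X,\,Y)=\infty$.

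I do not expect any serious obstacle; the proof is essentially a direct application of the distortion inequality, and the only thing to watch is the infinite-diameter case, which is handled by the same estimate applied to a divergent sequence of pairs.
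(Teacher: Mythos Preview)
The paper does not give its own proof of this claim; it is stated with citations to \cite{BBI} and \cite{TuzhilinLectures} and used as a black box. Your argument is the standard one and is essentially correct.

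There is one small slip to fix: when you fix $x,\,x'\in X$ and want to find $y,\,y'\in Y$ with $(x,\,y),\,(x',\,y')\in R$, the property you need is the surjectivity of $\pi_X|_R$ (every $x\in X$ occurs as a first coordinate of some pair in $R$), not of $\pi_Y|_R$. The symmetric inequality $\diam Y\le\diam X+\dis R$ is then the step that uses surjectivity of $\pi_Y|_R$. With this correction, both the finite-diameter case and the infinite-diameter case are handled correctly.
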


From now on, we suppose that $\R^n$ is always endowed with the standard Euclidean norm.

\begin{definition}
Denote by $\Iso(\R^n)$ the group of all isometries of $\R^n$. For arbitrary non-empty subsets $X,\,Y\ss\R^n$, the \emph{Euclidean Gromov--Hausdorff distance\/} is
$$
\dist_{EH}(X,\,Y)=\inf_{T\in \Iso(\R^n)}\dist_{H}\bigl(X,\,T(Y)\bigr).
$$
\end{definition}

\begin{theorem}[\cite{Memoli}]\label{theorem:ineqMemoli}
Let $X,\,Y\ss\R^n$ be non-empty compact subsets. Then
$$
\dist_{GH}(X,\,Y)\le \dist_{EH}(X,\,Y)\le c_n'\cdot M^{\frac12}\cdot\bigl(\dist_{GH}(X,\,Y)\bigr)^{\frac12},
$$
where $M=\max\bigl\{\diam(X),\,\diam(Y)\bigr\}$ and $c_n'$ is a constant that depends only on $n$.
\end{theorem}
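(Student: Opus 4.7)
The left inequality $\dist_{GH}(X,Y)\le\dist_{EH}(X,Y)$ is immediate: for every $T\in\Iso(\R^n)$, the triple $\bigl(X,\,T(Y),\,\R^n\bigr)$ is a metric realization of the pair $(X,\,Y)$, hence $\dist_{GH}(X,Y)\le\dist_H\bigl(X,T(Y)\bigr)$; taking the infimum over $T$ yields the claim. So the whole content of the theorem is the upper bound by $c_n'M^{1/2}\bigl(\dist_{GH}(X,Y)\bigr)^{1/2}$, and I will concentrate on this.

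The plan is to convert an almost-optimal abstract correspondence into an almost-optimal \emph{rigid motion} of $\R^n$. Fix $\delta>0$ and, using Claim~\ref{claim:distGHformula}, pick $R\in\cR(X,Y)$ with $\tau:=\dis R<2\dist_{GH}(X,Y)+\delta$. The first step is to select an "affine frame" in $X$: a tuple $x_0,x_1,\ldots,x_k\in X$ with $k=\dim\mathrm{aff}(X)\le n$, chosen greedily so that each $x_{i+1}$ maximizes the distance to $\mathrm{aff}(x_0,\ldots,x_i)$. A standard volume argument shows that this simplex is well-conditioned, with minimum height bounded below in terms of $\diam(X)$ and the intrinsic thickness of $X$ only. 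Pick $y_i\in Y$ with $(x_i,y_i)\in R$; the definition of $\dis R$ gives $\bigl||x_ix_j|-|y_iy_j|\bigr|\le\tau$ for all $i,j$.

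The second step is Procrustes-type alignment. From the pairwise distance approximation, polarization yields that the Gram matrices of $(x_i-x_0)$ and $(y_i-y_0)$ differ entrywise by $O(M\tau)$, since each entry has the form $\tfrac12\bigl(|\cdot|^2+|\cdot|^2-|\cdot|^2\bigr)$ and each squared distance is perturbed by at most $2M\tau+\tau^2\le 3M\tau$ (assuming $\tau\le M$; otherwise the theorem is trivial by inflating $c_n'$). Standard stability of the polar decomposition, applied after extending the frame to all of $\R^n$ using the well-conditioned simplex of Step~1, produces an isometry $T\in\Iso(\R^n)$ with
$$
|T(y_i)-x_i|\le c_n\sqrt{M\tau}\qquad (i=0,\ldots,k),
$$
where $c_n$ depends on $n$ through the lower bound on the simplex volume.

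The third step extends this pointwise bound to a Hausdorff bound. For arbitrary $y\in Y$, pick $x\in X$ with $(x,y)\in R$. The distances $|yy_i|$ and $|xx_i|$ differ by at most $\tau$, and by the previous step $T(y_i)$ lies within $c_n\sqrt{M\tau}$ of $x_i$; since $T$ is an isometry, $|T(y)\,T(y_i)|=|yy_i|$, so the point $T(y)$ satisfies nearly the same distance constraints from $x_0,\ldots,x_k$ as $x$ does. Because the frame is well-conditioned, a point in $\R^n$ is determined by its distances to $x_0,\ldots,x_k$ up to a controlled error, and one concludes $|T(y)-x|\le c_n'\sqrt{M\tau}$. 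The symmetric argument with $R^{-1}$ handles $x\in X$, proving $\dist_H\bigl(X,T(Y)\bigr)\le c_n'\sqrt{M\tau}$. Letting $\delta\to0+$ gives the desired inequality.

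The main obstacle is Step~2: the explicit $\sqrt{\,\cdot\,}$ loss from distortion to displacement is inherent, arising from the linearization of the condition "$T$ is an isometry" via the Gram matrix, and one must keep track of how the constant depends on $n$ and on the minimum simplex volume. A secondary subtlety is the degenerate case when $X$ is nearly lower-dimensional: one then works inside the affine hull and extends $T$ arbitrarily in the orthogonal complement, which is harmless because the bound for $y\in Y$ only uses distances to frame points already in $\mathrm{aff}(X)$.
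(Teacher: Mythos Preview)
The paper does not prove this theorem at all: it is quoted from \cite{Memoli} and used as a black box in the proof of Corollary~\ref{corMemoli} and Theorem~\ref{thm:main}. There is therefore no ``paper's own proof'' to compare your attempt with.

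On the merits of your sketch itself, the left inequality and the overall architecture (select a frame, compare Gram matrices, align by a rigid motion, propagate to all points) are sound and indeed close in spirit to the argument in \cite{Memoli}. But there is a real gap in how you control the constant. You write that the greedy simplex is ``well-conditioned, with minimum height bounded below in terms of $\diam(X)$ and the intrinsic thickness of $X$,'' and then that ``$c_n$ depends on $n$ through the lower bound on the simplex volume.'' These two statements are in tension: the simplex volume is a feature of the particular set $X$, not of the ambient dimension. If $X\subset\R^2$ is a $1\times\varepsilon$ rectangle, your greedy frame has $k=2$ but height of order $\varepsilon$, and the stability constants in Steps~2 and~3 blow up like a negative power of $\varepsilon$. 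The theorem, however, promises a bound with $c_n'$ depending on $n$ alone. Your proposed fix---pass to the affine hull when $k<n$---handles only the exactly degenerate case, not the nearly degenerate one, and in the nearly degenerate case $Y$ need not lie close to $\mathrm{aff}(x_0,\ldots,x_k)$ either, so ``extending $T$ arbitrarily in the orthogonal complement'' is not harmless.

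To repair this you would need either a thickening/dimension-reduction dichotomy that trades thinness of $X$ against the size of $\dist_{GH}(X,Y)$, or an argument that bypasses a single rigid frame altogether; \cite{Memoli} takes the latter route.
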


\begin{corollary} \label{corMemoli}
Let $X,\,Y\ss\R^n$ be non-empty bounded subsets. Then 
$$
\dist_{GH}(X,\,Y)\le\dist_{EH}(X,\,Y)\le c_n'\cdot M^{\frac12}\cdot\bigl(\dist_{GH}(X,\,Y)\bigr)^{\frac12},
$$ 
where $M=\max\bigl\{\diam(X),\,\diam(Y)\bigr\}$ and $c_n'$ is a constant that depends only on $n$.
\end{corollary}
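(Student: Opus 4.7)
The plan is to reduce Corollary~\ref{corMemoli} to Theorem~\ref{theorem:ineqMemoli} by replacing $X$ and $Y$ with their closures $\bA X:=\bX$ and $\bY$ in $\R^n$. Since $X,Y$ are bounded, so are $\bX,\bY$; being closed and bounded in $\R^n$, they are compact by Heine--Borel. So Theorem~\ref{theorem:ineqMemoli} applies to the pair $(\bX,\bY)$ and gives
$$
\dist_{GH}(\bX,\bY)\le\dist_{EH}(\bX,\bY)\le c_n'\cdot \bigl(\max\{\diam\bX,\diam\bY\}\bigr)^{1/2}\cdot\bigl(\dist_{GH}(\bX,\bY)\bigr)^{1/2}.
$$
It then suffices to verify that each of the quantities $\diam$, $\dist_{EH}$, and $\dist_{GH}$ is insensitive to taking closures, so that the displayed inequality transfers verbatim to $(X,Y)$.

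For the diameter, continuity of the metric immediately gives $\diam(\bX)=\diam(X)$ and similarly for $Y$. For the Hausdorff distance, the standard identity $\dist_H(\bar A,\bar B)=\dist_H(A,B)$ follows by approximating any point of $\bar A$ by a point of $A$ within arbitrarily small error; combined with the fact that every isometry $T\in\Iso(\R^n)$ is a homeomorphism and therefore satisfies $T(\bY)=\overline{T(Y)}$, this yields $\dist_H\bigl(X,T(Y)\bigr)=\dist_H\bigl(\bX,T(\bY)\bigr)$ for every $T$, and hence $\dist_{EH}(X,Y)=\dist_{EH}(\bX,\bY)$ by taking infimum.

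For the Gromov--Hausdorff distance, I would use the ``zero-distance to closure'' trick: view $X$ and $\bX$ as subsets of the same ambient space $\R^n$, where $X$ is dense in $\bX$, so $\dist_H(X,\bX)=0$; the definition of $\dist_{GH}$ (via metric realizations of $(X,\bX)$) then gives $\dist_{GH}(X,\bX)=0$, and likewise for $(Y,\bY)$. Applying the triangle inequality for the Gromov--Hausdorff pseudo-metric twice,
$$
\bigl|\dist_{GH}(X,Y)-\dist_{GH}(\bX,\bY)\bigr|\le\dist_{GH}(X,\bX)+\dist_{GH}(Y,\bY)=0,
$$
so $\dist_{GH}(X,Y)=\dist_{GH}(\bX,\bY)$. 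Plugging these three equalities into the inequality for $(\bX,\bY)$ finishes the proof.

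The only mildly delicate point is checking that the Hausdorff distance is unchanged by closure for \emph{both} arguments simultaneously and that this survives the $\inf_{T\in\Iso(\R^n)}$ in the definition of $\dist_{EH}$; but this reduces to the fact that isometries of $\R^n$ are homeomorphisms, so the whole argument is essentially a matter of bookkeeping rather than a genuine obstacle.
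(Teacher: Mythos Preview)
Your argument is correct and follows exactly the paper's approach: pass to closures, invoke Heine--Borel to get compactness, apply Theorem~\ref{theorem:ineqMemoli}, and transfer back using the equalities $\dist_{GH}(X,Y)=\dist_{GH}(\bX,\bY)$, $\dist_{EH}(X,Y)=\dist_{EH}(\bX,\bY)$, $\diam X=\diam\bX$, $\diam Y=\diam\bY$. The paper simply asserts these equalities without justification, whereas you spell out why each holds; the underlying idea is identical.
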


\begin{proof}
Since $X$ and $Y$ are bounded, $\bX$ and $\bY$ are compact. Then the desired inequalities for $X$ and $Y$ follow from Theorem~\ref{theorem:ineqMemoli} and equalities $\dist_{GH}(X,\,Y)=\dist_{GH}(\bX,\,\bY)$, $\dist_{EH}(X,\,Y)=\dist_{EH}(\bX,\,\bY)$, $\diam\bX=\diam X$, $\diam\bY=\diam Y$.
\end{proof}

\section{The main theorem}
\markright{\thesection.~The main theorem}
Now we formulate and prove the main theorem of this paper.

\begin{theorem}\label{thm:main}
Let $A\ss\R^n$, $t=\sup\bigl\{r:\exists\,B_r(x)\ss\R^n\sm A\bigr\}$. Then $\dist_{GH}(\R^n,\,A)<\infty$ if and only if $t<\infty$. 
\end{theorem}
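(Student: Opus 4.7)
For the easy direction I would simply observe that $t<\infty$ forces every closed ball in $\R^n$ of radius exceeding $t$ to meet $A$, so $A$ is a $t$-net in $\R^n$, giving $\dist_H(\R^n,A)\le t$ and hence $\dist_{GH}(\R^n,A)\le t<\infty$.

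For the hard direction I would argue by contradiction: assume $D:=\dist_{GH}(\R^n,A)<\infty$ but $t=\infty$. By Claim~\ref{claim:distGHformula} fix a correspondence $\s\in\cR(\R^n,A)$ with $\dis\s\le C:=2D+1$ and let $f\:\R^n\to A$ be any selection of $\s$, so $f$ is a $C$-near-isometry. The plan is to use Corollary~\ref{corMemoli} to convert this abstract near-isometry into a geometric one. Applied to the bounded pair $X=B_{R_0}(c)$ and $Y=f(X)\ss A$, whose Gromov--Hausdorff distance is at most $C/2$ via the graph of $f$, it yields an isometry $T$ of $\R^n$ with
$$
\dist_H\bigl(B_{R_0}(c),\,T(Y)\bigr)\le \eta(R_0)=c_n'\sqrt{(2R_0+C)(C/2)}=O(\sqrt{R_0}),
$$
the implicit constant depending only on $n$ and $D$. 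Thus, up to an isometry of $\R^n$, the set $A$ contains an $\eta(R_0)$-net of a ball of radius $R_0$.

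I would then upgrade this into a Hyers--Ulam-type stability statement for $f$ itself: there exist an isometry $T_\infty$ of $\R^n$ and a constant $K=K(n,D)$ with $|f(x)-T_\infty(x)|\le K$ for every $x\in\R^n$. Once such a $T_\infty$ is available, $T_\infty^{-1}(f(x))\in T_\infty^{-1}(A)$ lies within $K$ of $x$ for every $x$, so $T_\infty^{-1}(A)$ is a $K$-net in $\R^n$; since $T_\infty$ is an isometry of $\R^n$, the net property transfers to $A$, giving $t\le K<\infty$ and the desired contradiction. To construct $T_\infty$ I would fix a non-degenerate simplex $\Sigma=\{p_0,\ldots,p_n\}\ss\R^n$ with pairwise distinct side lengths, apply Corollary~\ref{corMemoli} to $\Sigma$ and $f(\Sigma)$ to obtain an isometry $T$ with each $|T(f(p_i))-p_i|$ small, and then use the $C$-near-isometry property of $f$ to control $\bigl||T(f(x))-p_i|-|x-p_i|\bigr|$ for every $i$; trilateration from the $n+1$ distances to $\Sigma$ then pins $T(f(x))$ close to $x$ whenever $x$ is near $\Sigma$.

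The main obstacle is that trilateration from a fixed simplex is not uniformly stable on all of $\R^n$: the Lipschitz constant of the inverse of the distance-to-$\Sigma$ map degenerates as $x$ moves far from $\Sigma$, so the naive extension of the pointwise bound to all $x\in\R^n$ loses control. Overcoming this—i.e., extracting a single $T_\infty$ with $|f(x)-T_\infty(x)|\le K$ uniformly in $x$—is where I expect the delicate use of Corollary~\ref{corMemoli}, combined with the richness of $\Iso(\R^n)$ (which the authors emphasize is exactly what fails in other finite-dimensional normed spaces), to play its role: one should apply the corollary to a growing family of balls $B_R(c)$ and argue that the resulting isometries $T_R$ stabilize as $R\to\infty$, thereby furnishing the uniform bound needed to close the argument.
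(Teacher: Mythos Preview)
Your easy direction is fine. For the hard direction, the gap you yourself flag is real and is not closed in the proposal: showing that the isometries $T_R$ obtained from Corollary~\ref{corMemoli} on the balls $B_R(c)$ stabilise as $R\to\infty$ is essentially the Hyers--Ulam stability theorem for $\e$-isometries $\R^n\to\R^n$, a substantial statement that your trilateration sketch does not prove (and whose naive version, as you note, fails because the inverse of the distance-to-$\Sigma$ map blows up away from $\Sigma$). Without that stabilisation you do not obtain a single $T_\infty$, and without $T_\infty$ the net conclusion does not follow.

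The paper sidesteps this global problem entirely by arguing locally. Instead of looking for one isometry that works for all of $\R^n$, it fixes an arbitrary $a\in A$, chooses $p\in R^{-1}(a)$, and applies Corollary~\ref{corMemoli} \emph{once} to the bounded pair $X'=S_N(p)\cup\{p\}$ and $Y'=R(S_N(p))\cup\{a\}$, with $N$ depending only on $n$ and $c=\dis R$. From the resulting Hausdorff bound $O(\sqrt N)$ it extracts a purely geometric fact about $A$ near $a$: for every cone of a fixed aperture with vertex at $a$, there is a point of $A$ inside that cone at distance between $N-c$ and $N+c$ from $a$. Since this holds for \emph{every} $a\in A$ with the same $N$, any ball $B_r(x)\ss\R^n\sm A$ with $r$ large enough to swallow such an annular cone-slice (take $a\in S_r(x)$ and point the cone toward $x$) is impossible. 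Thus no coherence of isometries across scales is ever needed; a single scale $N$, chosen once, already forces $t<\infty$. This local ``points of $A$ in every direction at bounded distance'' mechanism is the idea your outline is missing.
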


\begin{proof}
Suppose that $t<\infty$. Then $A$ is a $(t+1)$-net in $\R^n$. Hence, $\dist_H(\R^n,\,A)\le t+1<\infty$. It follows from the definition of Gromov--Hausdorff distance that 
$$
\dist_{GH}(\R^n,\,A)\le\dist_H(\R^n,\,A)\le t+1<\infty.
$$

Suppose now that $\dist_{GH}(\R^n,\,A)<\infty$. According to Claim~\ref{claim:distGHformula}, there exists a correspondence $R$ between $\R^n$ and $A$ with a distortion $\dis\,R=c<\infty$. Let us choose an arbitrary point $a\in A$ and some point $p\in R^{-1}(a)$.

Let $c_n'$ be the constant from Corollary~\ref{corMemoli}, and put $T=\sqrt{3c}\,c_n'$. Choose $N\in\mathbb{N}$ such that  $3T\sqrt N<\frac N2$, $c<N$, $c<T\sqrt N$.

We put $X=S_N(p)$, $X'=X\cup\{p\}$, $Y=R(X)$, $Y'=Y\cup\{a\}$. Note that $Y\ss A$. The correspondence $R$, being restricted to $X'$ and $Y'$, generates a correspondence $R'$ with distortion $\dis R'\le c$.

Note that $\diam X'=2N$. By Claim~\ref{ineqdiam}, $\bigl|\diam X' - \diam Y'\bigr|\le 2\dist_{GH}(X',\,Y')$.
By Claim~\ref{claim:distGHformula}, it holds $2\dist_{GH}(X',\,Y')\le c$. Hence, $\bigl|\diam X'-\diam Y'\bigr|\le c$ and, thus, $\diam Y'\le 2N+c$. Therefore, $X'$ and $Y'$ are both bounded.

By Corollary~\ref{corMemoli}, we obtain
\begin{align*}
\dist_{EH}(X',\,Y')&\le c_n'\cdot\sqrt{\max\{\diam(X'),\,\diam(Y')\}}\cdot\dist_{GH}(X',\,Y')^{\frac12}\le\\
&c_n'\cdot\sqrt{2N+c}\cdot\sqrt c<c_n'\sqrt{3N}\sqrt c=T\sqrt N.
\end{align*}
It follows from the definition of Euclidean Gromov--Hausdorff distance that there exists an isometry $f\:\R^n\to\R^n$ such that $\dist_H\bigl(X',\,f(Y')\bigr)<T\sqrt N$.

Now construct the relation $U\ss Y'\x X'$ in the following way:
$$
U=\bigl\{(y,\,x):|f(y)x|<T\sqrt N,\,y\in Y',\,x\in X'\bigr\}.
$$
According to Claim~\ref{claim:GHAB}, the inequality $\dist_{H}\bigl(X',\,f(Y')\bigr)<T\sqrt N$ implies that $U$ is a correspondence and $\dis U\le 2T\sqrt N$. Consider the relation $Q =U\c R'\ss X'\x X'$. Since $U$ and $R'$ are correspondences, $Q$ is a correspondence by Claim~\ref{corrcomposition}. By the same Claim~\ref{corrcomposition}, we have $\dis Q\le\dis U+\dis R'\le 2T\sqrt N+c$.

Choose an arbitrary $x\in Q^{-1}(p)$. Let us prove that $x=p$. Suppose $x\neq p$. Then $x\in X$. Take $x'\in X$ such that $|xx'|<T\sqrt N-c$ and choose some $q\in Q(x')$. By definition of distortion, $\bigl||pq|-|xx'|\bigr|\le 2T\sqrt N+c$. Hence, $|pq|<3T\sqrt N<\frac N2<N$. Since $|pu|=N$ for every $u\in X$, we conclude that $q=p$. Similarly, $(p,\,x)\in Q$ implies that $x=p$.

We have proved that $Q^{-1}(p)=Q(p)=\{p\}$. Since $R'(p)=\{a\}$, it follows that $U(a)=\{p\}$.

Define a cone $D=\cup_{t\in \R_+}t\,B_{\frac N2}(m)$ for some point $m\in S_N(0)$.

Let us prove that for every cone $T_D$ isometric to $D$ with its vertex in $f(a)$, there exists $y\in Y$ such that $f(y)\in T_D$.

Consider a cone $T_D-f(a)+p$. Denote its axis by $\ell$. Consider the point $w=\ell\cap X$. Since $\dist_H\bigl(X',\,f(Y')\bigr)\le T\sqrt N$, there exists $y\in Y'$ such that $|wf(y)|\le T\sqrt N$. Since $U(a)=\{p\}$ and $|pw|=N>T\sqrt N$, it follows that $y\neq a$ and $y\in Y$.

Since $U(a)=\{p\}$ and $\dis(U)<T\sqrt N$, it follows that $|f(a)p|<T\sqrt N$. Since the point $w+f(a)-p$ belongs to the axis of the cone $T_D$, to show that $f(y)\in T_D$ it suffices to prove that $\|f(y)-w-f(a)+p\|\le\frac N2$. By triangle inequality,
$$
\|f(y)-w-f(a)+p\|\le|f(y)w|+|f(a)p|<2T\sqrt N<\frac N2.
$$
Hence, $f(y)\in T_D$.

Since $f$ is an isometry, it follows that for every cone $T_D$ isometric to $D$ with its vertex in $a$, there exists $y\in Y\cap T_D$. Since $\dis R'\le c$ and $Y=R(X)$, it follows that $\bigl||ya|-N\bigr|\le c$.

Therefore, we have proven the following statement: \emph{for an arbitrary point $a\in A$ and an arbitrary cone $T_D$ isometric to $D$ with its vertex in $a$, there exists a point $a'\in T_D\cap\bigl(B_{N+c}(a)\sm B_{N-c}(a)\bigr)\cap A$}.

Suppose now that $A$ is not an $\e$-net in $\R^n$ for each positive $\e$. Let us choose some ball $B_r(x)\ss\R^n\sm A$. Without loss of generality, suppose that there exists a point $a\in A$ such that $a\in S_r(x)$. Consider a cone $T_D$ isometric to $D$ with its vertex at $a$, whose axis starts at $a$ and contains $x$. Let us choose $r$ so large that the following inclusion holds:
$$
T_D\cap\bigl(B_{N+c}(a)\sm B_{N-c}(a)\bigr)\ss B_r(x).
$$
Hence, according to the proven statement, we get $B_r(x)\cap A\ne\0$, a contradiction.
\end{proof}

Theorem~\ref{thm:main} cannot be generalized to arbitrary Euclidean normed spaces.
\begin{example}\label{examplel2}
Consider the space $\ell_2$ of all sequences $(x_1,x_2,\ldots)$ such that $\sum_{i=1}^\infty x_i^2<\infty$. It is isometric to its subspace
$$
A=\bigl\{(x_1,\,x_2,\,\ldots)\in\ell_2:x_1=0\bigr\}.
$$
However, $A$ is not an $\e$-network in $\ell_2$ for every $\e>0$ because
$$
\dist_H\bigl((\e,\,0,\,\ldots),\,A\bigr)\ge\e.
$$
\end{example}

\begin{remark}
As Example~\ref{examplel2} shows, the finiteness of the dimension is a crucial condition in Theorem~\ref{thm:main}. In fact, it is required by Theorem~1 from \cite{Memoli}, on which Theorem~\ref{theorem:ineqMemoli} is based.
\end{remark}


\end{document}